\documentclass[10pt]{amsart}
\usepackage{amssymb,amsmath,amsthm,amsfonts,amsopn,url}
\usepackage{amscd,amssymb,amsopn,amsmath,amsthm,graphics,amsfonts,enumerate,verbatim,calc}
\usepackage[all]{xy}
\theoremstyle{plain}
\newtheorem{thm}{Theorem}[section]
\newtheorem*{mt*}{Theorem (Huneke-Lyubeznik)}
\newtheorem*{cj*}{Conjecture}
\newtheorem*{nt*}{Notations}

\newtheorem{lemma}[thm]{Lemma}

\theoremstyle{definition}

\newcommand{\ideal}[1]{\mathfrak{#1}}

\newcommand{\m}{\ideal{m}}
\newcommand{\n}{\ideal{n}}

\newcommand{\func}[1]{\mathrm{#1} \,}

\newcommand{\height}{\func{ht}}
\newcommand{\grade}{\func{grade}}
\newcommand{\ara}{\func{ara}}

\newcommand{\Ass}{\func{Ass}}

\newcommand{\ZZ}{{\mathbb Z}}

\title[]{on local cohomology modules over ramified regular local rings}

\author[]{Rajsekhar Bhattacharyya}

\address{Dinabandhu Andrews College, Garia, Kolkata 700084, India}

\email{rbhattacharyya@gmail.com}

\thanks{}

\keywords{Local Cohomology}

\subjclass[2010]{13D45}
\begin{document}

\begin{abstract}
In this paper, we show examples of local cohomology modules over ramified regular local ring, having finite set of associated primes. In doing so we consider our ramified regular local ring as Eisenstein extension of an unramified regular local ring sitting inside it and we use the Mayer-Vietoris spectral sequence to show the finiteness of the set of associated primes. In ramified regular local ring for extended ideal (from the unramified one) set of associative primes of a local cohomology module is always finite. Using this result, for non extended ideal, we show example of local cohomology module which has finite set of associative primes and where associative primes also contains prime number $p$.
\end{abstract}

\maketitle
\section{introduction}

Let $R$ be a Noetherian ring and $M$ be a module over it. For an ideal $I\subset R$ and for some integer $i\geq 0$, consider local cohomology module $H^i_I (M)$ with support in the ideal $I$. In the fourth problem of \cite{Hu}, it is asked that whether local cohomology modules of Noetherian rings have finitely many associated prime ideals. There are examples given in \cite{Si}, \cite{Ka}, and \cite{SS}, which show that the set of associated primes of $i$-th local cohomology module $H^i_I (R)$ of a Noetherian ring $R$ can be infinite. However, there are several important cases where we have the finiteness of the set of associated primes of local cohomology modules and we list them below: (1) Regular rings of prime characteristic \cite{HS}, (2) regular local and affine rings of characteristic zero \cite{Ly1}, (3) unramified regular local rings of mixed characteristic \cite{Ly2} and (4) smooth algebra over $\ZZ$ \cite{BBLSZ}. These results support Lyubeznik conjecture, (\cite{Ly1}, Remark 3.7): 

\textbf{Conjecture:} Let $R$ be a regular ring and $I\subset R$ be its ideal, then for each $i\geq 0$, $i$-th local cohomology module $H^i_I (R)$ has finitely many associated prime ideals.

The conjecture is open for the ramified regular local ring, in fact, more specifically, it is open only for the primes that contain $p$ (see \cite{Nu3}).

In this paper, we show the following result: Let $S$ be a 
complete ramified regular local ring of mixed characteristic which is an Eisenstein extension of an unramified complete regular local ring $R$ sitting inside it. Let $J$ be an ideal of $S$. Assume that minimal primes of $J$ are the extension of an ideals $I_t$ for $t=1,\ldots, n$ of $R$ such the $S/I_t S$ are all regular rings. Then $\Ass H^i_{J} (S)$ is finite.

The proof of above theorem uses the properties of Eisenstein extension (see, for example \cite{Matsumura} for its reference) and the Mayer-Vietoris spectral sequence (see \cite{AGZ}, \cite{ABZ} and \cite{Ly4}). In ramified regular local ring for extended ideal (from the unramified one), set of associative primes of a local cohomology module is finite. Using the above result, for non extended ideal, we show example of local cohomology module which has finite set of associative primes and where associative primes also contains prime number $p$.

\section{the main result}

Let $(R;m)$ be a local ring of mixed characteristic $p > 0$. We say $R$ is unramified if $p\notin {\m}^2$ and it is ramified if $p\in {\m}^2$. For normal local ring $(R,\m)$, consider the extension ring defined by $S=R[X]/f(X)$ where $f(X)=X^l+ a_1X^{l-1}+\ldots +a_l$ with $a_i\in \m$ for every $i=1,\ldots,l$ and $a_l\notin {\m}^2$. This ring $(S,\n)$ is local and it is defined as an Eisenstein extension of $R$ and $f(X)$ is known as an Eisenstein polynomial (see page 228-229 of \cite{Matsumura}). 

Here we note down the following important well known properties of Eisenstein extension:

(1) An Eisenstein extension of regular local ring is regular local (see Theorem 29.8 (i) of \cite{Matsumura}) and in this context, we observe that an Eisenstein extensions of an unramified regular local ring is a ramified regular local ring.

(2) Every ramified regular local ring is an Eisenstein extension of some unramified regular local ring (see Theorem 29.8 (ii) of \cite{Matsumura}. 

(3) The Eisenstein extension mentioned in (1) and (2) are faithfully flat (apply Theorem 23.1 of \cite{Matsumura}).

In proving our theorem, we need the Mayer-Vietoris spectral sequence, which arises from te Roos complex \cite{Roo} where in \cite{AGZ}, it is stated and proved only for defining ideal of an arrangement of linear subvarieties. In \cite{Ly4} it is extened for arbitrary commutative ring and module where \cite{ABZ}, the spectral sequnce has been constructed and properties are discussed very elaborately for more general categorical objects. Although the Mayer-Vietoris spectral sequences as observed in both \cite{AGZ} and \cite{Ly4}, differ in $E_0$ and $E_1$ page, but the coincide on $E_2$ page (see 4.2 Example \cite{ABZ} for detail).  

In this paper, we need the following theorem from \cite{Ly4} regarding the Mayer-Vietoris spectral sequence. 

\begin{thm}[Theorem on MVSS]\label{MV}
Let $A$ be an arbitrary commutative ring, let $I_1,\dots,I_n\subset A$ be ideals and
let $M$ be an $A$-module. There exists a spectral sequence
$$E_1^{-p,q}=\bigoplus_{i_0<\dots<i_p}H^q_{I_{i_0}+\dots+I_{i_p}}(M)
\Longrightarrow H^{q-p}_{I_1\cap\dots\cap I_n}(M).$$
\end{thm}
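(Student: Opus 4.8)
The plan is to realize the asserted spectral sequence as one of the two spectral sequences of a double complex obtained by feeding an injective resolution of $M$ into the Roos complex of the torsion functors attached to the partial sums $I_{i_0}+\dots+I_{i_p}$. Fix an injective resolution $M\to E^{\bullet}$ in the category of $A$-modules. For a subset $S=\{i_0<\dots<i_p\}\subseteq\{1,\dots,n\}$ write $I_S=\sum_{i\in S}I_i$, and recall that, since $\V(I_S)=\bigcap_{i\in S}\V(I_i)$, the functor $\Gamma_{I_S}$ is the composite $\Gamma_{I_{i_0}}\circ\dots\circ\Gamma_{I_{i_p}}$; this is the source of the sums in the $E_1$ page. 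The double complex I would use is $K^{-p,q}=\bigoplus_{|S|=p+1}\Gamma_{I_S}(E^{q})$, with vertical differential induced by $E^{\bullet}$ and horizontal differential the alternating sum of the canonical inclusions $\Gamma_{I_S}\hookrightarrow\Gamma_{I_{S\setminus\{i\}}}$ (well defined because $I_{S\setminus\{i\}}\subseteq I_S$ forces $\Gamma_{I_S}\subseteq\Gamma_{I_{S\setminus\{i\}}}$), which carries $(p+1)$-subsets to $p$-subsets and so raises $-p$ by one.

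The first of the two spectral sequences of $K^{\bullet,\bullet}$ is computed by taking cohomology in the vertical ($q$) direction first: since $\Gamma_{I_S}(E^{\bullet})$ computes local cohomology with support in $I_S$, its $E_1$ page is exactly $E_1^{-p,q}=\bigoplus_{i_0<\dots<i_p}H^q_{I_{i_0}+\dots+I_{i_p}}(M)$, as required. The entire content of the theorem is therefore to identify the abutment, and this rests on a single homological input, the exactness of the Roos complex on injectives: for every injective $A$-module $E$ the augmented complex
$$0\to\Gamma_{I_1\cap\dots\cap I_n}(E)\to\bigoplus_{|S|=1}\Gamma_{I_S}(E)\to\bigoplus_{|S|=2}\Gamma_{I_S}(E)\to\dots\to\Gamma_{I_1+\dots+I_n}(E)\to0$$
is exact, where the augmentation uses $\V(I_1\cap\dots\cap I_n)=\bigcup_i\V(I_i)$.

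The main obstacle is precisely this exactness statement; the rest is formal. Over a Noetherian ring I would prove it through the structure theory of injectives: every injective is a direct sum of modules $E_A(A/\p)$, and the functors $\Gamma_{I_S}$ commute with direct sums, so it suffices to treat $E=E_A(A/\p)$. There $\Gamma_{I_S}(E)=E$ when $I_S\subseteq\p$ and $0$ otherwise, so setting $\Sigma=\{i:I_i\subseteq\p\}$ we get $\Gamma_{I_S}(E)=E$ iff $S\subseteq\Sigma$, while $\Gamma_{I_1\cap\dots\cap I_n}(E)=E$ iff $\Sigma\neq\emptyset$. The augmented Roos complex then becomes $E$ tensored over $\ZZ$ with the augmented simplicial chain complex of the full simplex on the vertex set $\Sigma$, whose reduced homology vanishes because a nonempty simplex is contractible (and everything is zero when $\Sigma=\emptyset$). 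This combinatorial acyclicity is the heart of the argument. For a genuinely arbitrary commutative ring, where injectives need not decompose in this way, I expect the exactness to require a more intrinsic argument phrased directly in terms of the torsion functors, and this is the step to which I would allocate the most care (compare the treatment in \cite{Ly4}).

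To finish, I would run the second spectral sequence of the same double complex, taking cohomology in the horizontal (Roos) direction first. For each fixed $q$ the row is the Roos complex evaluated on the injective $E^q$, which by the key lemma is exact away from homological position $-p=0$, where its cohomology is $\Gamma_{I_1\cap\dots\cap I_n}(E^q)$. Hence this spectral sequence collapses onto the single column $p=0$, whose cohomology is $H^q_{I_1\cap\dots\cap I_n}(M)$, so the total cohomology of $K^{\bullet,\bullet}$ in degree $n$ is $H^n_{I_1\cap\dots\cap I_n}(M)$. Because $-p$ ranges only over the finite set $\{-(n-1),\dots,0\}$ the double complex occupies finitely many columns, so both spectral sequences converge; and since the total degree attached to $K^{-p,q}$ is $q-p$, the first spectral sequence abuts to $H^{q-p}_{I_1\cap\dots\cap I_n}(M)$, which is exactly the displayed convergence statement.
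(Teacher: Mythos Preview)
The paper does not actually prove this statement: it is quoted from \cite{Ly4} (with background references to \cite{Roo}, \cite{AGZ}, \cite{ABZ}) and then used as a black box in the proof of the main theorem. So there is no in-paper argument to compare against. That said, your proposal is precisely the standard construction behind the cited result: form the double complex $K^{-p,q}=\bigoplus_{|S|=p+1}\Gamma_{I_S}(E^q)$ from an injective resolution, read off the stated $E_1$-page by taking vertical cohomology first, and identify the abutment by showing the horizontal (Roos) rows are acyclic on injectives so that the other spectral sequence collapses to the column $\Gamma_{I_1\cap\dots\cap I_n}(E^{\bullet})$. Your Noetherian reduction via $E=E_A(A/\p)$ and the reduced chain complex of the full simplex on $\Sigma=\{i:I_i\subseteq\p\}$ is exactly the right mechanism, and your flag that the genuinely non-Noetherian case needs a direct argument (as in \cite{Ly4}) rather than Matlis structure theory is appropriate.

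One correction you should make: the displayed augmented Roos complex has its arrows reversed. As you yourself note when describing the horizontal differential, $I_{S\setminus\{i\}}\subseteq I_S$ forces $\Gamma_{I_S}\subseteq\Gamma_{I_{S\setminus\{i\}}}$, so the natural maps run from larger $|S|$ to smaller $|S|$, with the augmentation on the right:
\[
0\longrightarrow\Gamma_{I_1+\dots+I_n}(E)\longrightarrow\cdots\longrightarrow\bigoplus_{|S|=2}\Gamma_{I_S}(E)\longrightarrow\bigoplus_{|S|=1}\Gamma_{I_S}(E)\longrightarrow\Gamma_{I_1\cap\dots\cap I_n}(E)\longrightarrow 0.
\]
There is no natural map $\Gamma_{I_1\cap\dots\cap I_n}(E)\to\Gamma_{I_i}(E)$ in the direction you wrote. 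This orientation is also the one the paper itself records when it describes the $E_1$-differential as induced by the inclusion $J_{i_0\ldots\widehat{i_j}\ldots i_p}\subset J_{i_0\ldots i_p}$, and it is the one that matches your own identification with the augmented simplicial \emph{chain} complex of the simplex on $\Sigma$. With the arrows fixed, the rest of your outline goes through.
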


In our theorem we need the condition so that the above spectaral sequence will be degenarate on $E_2$-page. In \cite{AGZ} in Theorem 1.2 it is shown the degenaracy in $E_2$-page for a defining ideal of an arrangement of linear subvarieties, when the underlying regular ring contains field. In \cite{ABZ} (see Lemma 4.15), degenaracy condition is analysed in a general setting and ideal which is cohomologically complete intersection (i.e. local cohomology module in the support of the ideal is non zero only at its height) used for the degenaracy. In \cite{Ly4} it has been mentioned that the spectral sequence will be degenarate on $E_2$ page if $A$ and $A/I_i$ are all regular (possibly arbitrary), with $M=A$ and gives reference of Theorem 1.2 of \cite{AGZ}. In our proof, we elaborate how the spectral sequence is degenate when $A$ and $A/I_i$ are regular with $M=A$. 

\begin{lemma}
For a regular local ring $(A,\o)$ of dimension $n$, if $I, J$ are ideals generated by part of regular system of parameters, or equvalently $A/I, A/J$ are regular rings, then $A/(I+J)$ is also a regular local ring. 
\end{lemma}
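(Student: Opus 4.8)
The plan is to translate the whole question into linear algebra over the residue field $k=A/\o$ by passing to the cotangent space $\o/\o^2$. I would take as known the criterion already quoted in the statement: for a regular local ring $A$, a quotient $A/\mathfrak a$ is regular exactly when $\mathfrak a$ is generated by part of a regular system of parameters, and this happens precisely when the images in $\o/\o^2$ of a minimal generating set of $\mathfrak a$ are $k$-linearly independent. Thus regularity of $A/(I+J)$ is equivalent to the images of (minimal) generators of $I+J$ being independent in $\o/\o^2$.

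First I would fix minimal generators $x_1,\dots,x_r$ of $I$ and $y_1,\dots,y_s$ of $J$; regularity of $A/I$ and $A/J$ says exactly that their classes span subspaces $V_I,V_J\subseteq\o/\o^2$ of dimensions $r$ and $s$. Since $I+J$ is generated by $x_1,\dots,x_r,y_1,\dots,y_s$, its image in $\o/\o^2$ is the sum $V_I+V_J$, and by Nakayama the minimal number of generators of $I+J$ is at least $\dim_k(V_I+V_J)$. Hence the lemma reduces to the assertion that these two numbers coincide, i.e. that one can select a single regular system of parameters $z_1,\dots,z_n$ of $A$ so that $I$ and $J$ are each generated by a subset of the $z_i$; granting this, $I+J$ is generated by the union of those two subsets, which is again part of a regular system of parameters, and the conclusion is immediate.

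The construction of such a common regular system of parameters is the step I expect to be the main obstacle, because it is the transversality of $V_I$ and $V_J$ inside $\o/\o^2$ that does the real work. I would attempt it by choosing a $k$-basis of $V_I\cap V_J$, extending it to bases of $V_I$ and of $V_J$, lifting all of these classes to elements of $\o$, and completing the resulting list to a full regular system of parameters of $A$. The delicate point is that the lifted generators of $I$ and of $J$ must remain simultaneously independent modulo $\o^2$; this is exactly the content that the hypotheses have to supply, and it is where a careless choice of generators (for instance generators agreeing to first order but differing in higher-order terms) would break the argument. Verifying that the situation at hand forces this independence is, to my mind, the heart of the lemma.
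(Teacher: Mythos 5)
You have correctly isolated the crux, and the bad news is that the gap you flagged cannot be closed: the lemma as stated is false. Write $\m$ for the maximal ideal of $A$ and $k=A/\m$. Take any two-dimensional regular local ring $A$ with regular system of parameters $x,y$ (say $A=k[[x,y]]$), and set $I=(x)$, $J=(x+y^2)$. Each of $x$ and $x+y^2$ has nonzero class in $\m/\m^2$, hence is part of a regular system of parameters ($\{x,y\}$, resp.\ $\{x+y^2,y\}$), so $A/I$ and $A/J$ are regular. But $I+J=(x,y^2)$, and $A/(I+J)$ contains the nonzero nilpotent $\bar{y}$, so it is not even reduced. In your notation $V_I=V_J=k\bar{x}$, so $\dim_k(V_I+V_J)=1$, while the minimal number of generators of $I+J$ is $2$: the equality to which you reduced the lemma genuinely fails, and no common regular system of parameters serving both $I$ and $J$ exists. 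Your parenthetical warning --- generators agreeing to first order but differing in higher-order terms --- is exactly the shape of the counterexample, so your instinct about where a proof must break was precise.

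For comparison, the paper's own proof works in $A/\m^2$ just as you propose: it picks $z_1,\dots,z_t\in I+J$ whose classes span the image $W$ of $I+J$, extends to a regular system of parameters $z_1,\dots,z_n$, and then asserts that $(z_1,\dots,z_t)A\subseteq I+J\subseteq (z_1,\dots,z_t,z_{t+1})A$ and that $I+J$ is prime ``along with'' the two flanking ideals, concluding $I+J=(z_1,\dots,z_t)A$ by a height count. The primality of $I+J$ is asserted without justification, and it is what the height argument secretly requires: a non-prime ideal can sit strictly between a height-$t$ prime and a height-$(t+1)$ prime. In the example above, with $t=1$ and $z_1=x$, the ideal $I+J=(x,y^2)$ is not prime and lies strictly between $(x)$ and $(x,y)$, so the paper's conclusion $(z_1,\dots,z_t)A=I+J$ is simply wrong there; moreover the first-order data only yields $I+J\subseteq (z_1,\dots,z_t)A+\m^2$, not the asserted right-hand containment in general. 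So the paper's argument fails at precisely the step you identified as the heart of the matter. The lemma is salvageable only under a stronger hypothesis --- for instance that $I$ and $J$ are generated by subsets of one common regular system of parameters, as happens for defining ideals of arrangements of linear subvarieties in \cite{AGZ} --- in which case $I+J$ is generated by the union of those subsets and the conclusion is immediate, exactly as you observed; and this defect propagates to the degeneration-on-$E_2$ argument in the main theorem, which invokes the lemma for the sums $I_{i_0}S+\dots+I_{i_p}S$.
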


\begin{proof}
Consider the ring $A/{\o}^2$, where sum of images of $I$ and $J$ forms a vector subspace $W\subset A/{\o}^2$ of dimension $t$ over $A/{\o}$. Suppose images $\bar{z}_1,\ldots,\bar{z}_t$ in $A/{\o}^2$ of the elements ${z}_1,\ldots,{z}_t$ from $(I+J)$ form a basis of $W$. Extend this basis to that of $\o/\o^2$ by the images $\bar{z}_1,\ldots,\bar{z}_t, \bar{z}_{t+1},\ldots, \bar{z}_n$ of the elements ${z}_1,\ldots,{z}_t,{z}_{t+1},\ldots, {z}_n$ from $A$. Now $({z}_1,\ldots,{z}_t)A \subset I+J$ and $z_{t+1}\notin I+J$. Since ${z}_1,\ldots,{z}_n$ form regular system of parameters, both $({z}_1,\ldots,{z}_t)A$ and $({z}_1,\ldots,{z}_t, z_{t+1})A$ are primes along with $I+J$. Now $\height ({z}_1,\ldots,{z}_t)A= t$ while $\height ({z}_1,\ldots,{z}_t, z_{t+1})A= t+1$ and we have the following inclusions $({z}_1,\ldots,{z}_t)A\subset I+J \subset ({z}_1,\ldots,{z}_t, z_{t+1})A$ (where right inclusion is strict). This gives $({z}_1,\ldots,{z}_t)A=I+J$ and we conclude.
\end{proof}

Now we prove the main result of the paper.

\begin{thm}
Let $S$ be a 
complete ramified regular local ring of mixed characteristic which is an Eisenstein extension of an unramified complete regular local ring $R$ sitting inside it. Let $J$ be an ideal of $S$. Assume that minimal primes of $J$ are the extension of an ideals $I_t$ for $t=1,\ldots, n$ of $R$ such the $S/I_t S$ are all regular rings. Then $\Ass H^i_{J} (S)$ is finite.
\end{thm}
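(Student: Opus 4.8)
The plan is to transport the problem from the ramified ring $S$, where no finiteness machinery is available, to the unramified ring $R$, where \cite{Ly2} applies, exploiting that the Eisenstein extension $R \to S$ is faithfully flat. First, since local cohomology depends only on the radical and the minimal primes of $J$ are the $I_t S$, one has $H^i_J(S)\cong H^i_{\bigcap_t I_t S}(S)$. I would then invoke the Mayer--Vietoris spectral sequence of Theorem \ref{MV} with $A=S$, the ideals $I_1S,\dots,I_nS$, and $M=S$, so that its abutment is exactly $H^{q-p}_{\bigcap_t I_t S}(S)=H^{q-p}_J(S)$, with $E_1$-page $E_1^{-p,q}=\bigoplus_{i_0<\dots<i_p}H^q_{(I_{i_0}+\dots+I_{i_p})S}(S)$.

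The key observation is that, because $S$ is flat over $R$, flat base change for local cohomology gives $H^q_{(I_{i_0}+\dots+I_{i_p})S}(S)\cong H^q_{I_{i_0}+\dots+I_{i_p}}(R)\otimes_R S$, and the $d_1$-differentials are obtained by tensoring the corresponding $R$-linear restriction maps with $S$. Since tensoring with the flat module $S$ commutes with taking homology, the entire spectral sequence over $S$ is the base change of the Mayer--Vietoris spectral sequence over $R$ for the ideals $I_1,\dots,I_n$; in particular $E_r^{-p,q}(S)\cong E_r^{-p,q}(R)\otimes_R S$ for every $r$, and likewise on the $E_\infty$-page. Moreover, faithfully flat descent of regularity applied to the local map $R/I_t\to S/I_tS$ shows that each $R/I_t$ is itself regular.

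Now I would apply the Lemma iteratively over $R$: since each $R/I_t$ is regular, every sum $I_{i_0}+\dots+I_{i_p}$ has a regular quotient, hence is generated by part of a regular system of parameters and is a cohomologically complete intersection, so $H^q_{I_{i_0}+\dots+I_{i_p}}(R)$ is concentrated in the single degree $q=\height(I_{i_0}+\dots+I_{i_p})$. This concentration is what I would use to elaborate the degeneration of the spectral sequence at the $E_2$-page over $R$ (the role promised in \cite{AGZ},\cite{Ly4}), which by base change transfers to an $E_2$-degeneration over $S$. Each $E_2^{-p,q}(R)$ is a subquotient of a finite direct sum of local cohomology modules over the unramified regular local ring $R$, hence lies in Lyubeznik's abelian category of modules with finite Bass numbers; as that category is closed under subquotients, every $E_2^{-p,q}(R)$ has finitely many associated primes.

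Finally I would push finiteness back up to $S$. The flat base change formula for associated primes, $\Ass_S(N\otimes_R S)=\bigcup_{\p\in\Ass_R N}\Ass_S(S/\p S)$, together with Noetherianity of $S$, shows each $E_\infty^{-p,q}(S)=E_2^{-p,q}(R)\otimes_R S$ has finitely many associated primes. Because of the $E_2$-collapse, $H^i_J(S)$ carries a finite filtration whose successive quotients are the finitely many $E_\infty^{-p,q}(S)$ with $q-p=i$, and since $\Ass$ of a module is contained in the union of the $\Ass$ of the quotients of any finite filtration, $\Ass_S H^i_J(S)$ is finite. I expect the main obstacle to be exactly the degeneration step: over the ramified ring $S$ there is no finiteness theory to lean on (this is the open case of the conjecture), so the whole argument must be routed through $R$, and the $E_2$-collapse has to be justified from the cohomologically-complete-intersection property furnished by the Lemma rather than from the field-containing hypothesis of \cite{AGZ}.
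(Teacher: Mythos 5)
Your overall architecture coincides with the paper's: the Mayer--Vietoris spectral sequence for the ideals $I_tS$, the Lemma to make every sum $(I_{i_0}+\dots+I_{i_p})S$ have a regular quotient and hence local cohomology concentrated in the single degree equal to its height, degeneration at $E_2$, a finite filtration of $H^i_J(S)$, and finiteness imported from \cite{Ly2} via the flat base-change formula for $\Ass$. Your detour through $R$ is legitimate in itself: regularity of $R/I_t$ does descend along the faithfully flat local map $R/I_t\to S/I_tS$, and since $R\to S$ is flat the whole Roos/Mayer--Vietoris construction base-changes, so $E_r^{-p,q}(S)\cong E_r^{-p,q}(R)\otimes_R S$ as you claim. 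The genuine gap is at the crucial finiteness step: you justify that $\Ass E_2^{-p,q}(R)$ is finite by placing $E_2$ in ``Lyubeznik's abelian category of modules with finite Bass numbers, closed under subquotients.'' No such tool exists in the unramified mixed-characteristic setting. Finiteness of all Bass numbers $\mu^j(\p,M)$ at each prime does not bound the \emph{number} of primes in $\Ass M$; the class of modules with finite Bass numbers is not closed under subquotients; and \cite{Ly2} proves finiteness of $\Ass H^i_I(R)$ for local cohomology modules themselves, not for arbitrary subquotients of finite direct sums of them. The subquotient-closed categories with finite $\Ass$ that make this kind of argument work are $F$-finite $F$-modules in characteristic $p$ \cite{Ly3} and holonomic $D$-modules in characteristic $0$ --- precisely the machinery that is missing in mixed characteristic, which is why the conjecture is open there. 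As stated, your proof of the theorem's hardest point is circular in spirit: it assumes a finiteness theory for subquotients that nobody has established over $R$ of mixed characteristic.

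The paper closes exactly this hole with an elementary observation you omitted: each ideal $(I_{i_0}+\dots+I_{i_p})S$ is generated by part of a regular system of parameters, hence is \emph{prime}, and a proper containment of primes strictly increases height; therefore two comparable such ideals supporting nonvanishing $E_1$-summands in the same degree $q$ must be equal, and each component of the $d_1$-differential between nonzero summands is either the identity map of $H^q_{J'}(S)$ or zero. Consequently $E_2^{-p,q}$ is again a \emph{direct sum} of modules $H^q_{IS}(S)$ with $I$ extended from $R$ --- no subquotient issue arises --- and \cite{Ly2} together with the flat base-change formula $\Ass_S(N\otimes_R S)=\bigcup_{\p\in\Ass_R N}\Ass_S(S/\p S)$ (which you state correctly) finishes the argument. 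Your degeneration step also needs a citation rather than concentration alone: the bidegrees of $d_2\colon E_2^{-p,q}\to E_2^{-p+2,q-1}$ do line up between height-$q$ and height-$(q-1)$ ideals, so one must invoke Lemma 4.15 of \cite{ABZ} (the cohomologically-complete-intersection criterion you allude to) or argue as the paper does via Lemma 4.5 and Remark 4.16 of \cite{ABZ} that no nonzero map $H^q_{J'}(S)\to H^{q-1}_{J''}(S)$ exists when $J''\subsetneq J'$. With the identity-or-zero analysis of $d_1$ added and the degeneration pinned to \cite{ABZ}, your proof becomes complete and is then essentially the paper's, with the base change to $R$ an optional (correct but unnecessary) extra layer.
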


\begin{proof}

The basic idea of the proof is as follows: Since $S/I_1 S,\dots, S/I_n S$ are regular rings, the Mayer-Vietoris spectral sequence will degenrate on $E_2$-page. This will imply that $H^i_{J} (S)$ has a finite filtration such that every subquotient module of the filtration is isomorphic to a local cohomology module whose set of associated primes is finite. Thus the set of associated primes of $H^i_{J} (S)$ is finite. 

We apply the above theorem on MVSS for the regular local ring $S$ and ideal $J$ with $M=S$. Set $J_{i_0\ldots i_p}=I_{i_0}S+\dots+I_{i_p}S$ and $J_{i_0\ldots\widehat{i_j}\ldots i_p}=I_{i_0} S+\dots+I_{i_{j-1}} S+I_{i_{j+1}} S+\dots+I_{i_p} S$:
$$E_1^{-p,q}=\bigoplus_{i_0<\dots<i_p}H^q_{J_{i_0\ldots i_p}}(S)
\Longrightarrow H^{q-p}_{I_1 S\cap\dots\cap I_n S}(S)= H^{q-p}_{J}(S)$$

where differentials $E_1^{-p,q}\to E_1^{-p+1,q}$ on $E_1$-page are horizontal. Consider their restriction to $H^q_{J_{i_0\ldots i_p}}(S)\subset E_1^{-p,q}$ where they take elements of it to some local cohomology module $H^{q}_{J_{i_0\ldots\widehat{i_j}\ldots,i_p}}(S)\subset E_2^{-p+1,q}$ and this map is induced by natural inclusion of $J_{i_0\ldots\widehat{i_j}\ldots i_p}\subset J_{i_0\ldots i_p}$.


Now using above lemma we observe that all the $S/J_{i_0\ldots i_p}$'s are regular for $p=0,\ldots, n-1$. So, $\grade J_{i_0\ldots i_p} = \height J_{i_0\ldots i_p} = \ara J_{i_0\ldots i_p}= s$ (say) and $H^q_{J_{i_0\ldots i_p}} (S)= 0$ if $q\neq s$. If $J_{i_0\ldots i_p}=J_{i_0\ldots\widehat{i_j}\ldots i_p}$ and $q=s$, the differential becomes the identity map, otherwise it is a zero map. Thus on the $E_2$ page, we get $E_2^{-p,q}$'s which are direct sum of local cohomology modules of the form $H^q_{J_{i_0\ldots i_p}}(S)$ where $\height J_{i_0\ldots i_p}=q$. There are differentials $E_2^{-p,q}\to E_2^{-p+2,q-1}$ on the $E_2$-page and consider their restriction to $H^q_{J_{i_0\ldots i_p}}(S)\subset E_1^{-p,q}$ where they take elements of it to some local cohomology module $H^{q-1}_{J_{i_0\ldots\widehat{i_j}\ldots,\widehat{i_k},\ldots,i_p}}(S)\subset E_2^{-p+2,q-1}$. We claim there is no nonzero map between them. 
To see this, suppose $J_{i_0\ldots i_p}= J_{i_0\ldots\widehat{i_j}\ldots,\widehat{i_k},\ldots,i_p}=J'$ (say), then atleast one of the local cohomology modules $H^q_{J'}(S)$ or $H^{q-1}_{J'}(S)$ will be a zero module. If $J_{i_0\ldots i_p}\supset J_{i_0\ldots\widehat{i_j}\ldots,\widehat{i_k},\ldots,i_p}$ where containment is proper, then by Lemma 4.5 of \cite{ABZ} (see also the Remark 4.16 of \cite{ABZ}) there will be no nonzero map. Thus we donot have any nonzero differential on $E_2$-page and the spectral sequence degenerates at $E_2$ page i.e. $E_{\infty}^{-p,q}= E_2^{-p,q}$. So $H^{q-p}_{I_1 S\cap\dots\cap I_n S}(S)= H^{q-p}_{J}(S)$ will get a filtration whose subquotient modules are isomorphic to $E_{\infty}^{-p,q}$ with fixed $q$ and where $p$ will vary. Since at each page of the spectral sequence we have only finitely many rows, we have finite filtration and it is then sufficient to show that for each $(-p,q)$-pair, the set of asssociated primes of $E_{\infty}^{-p,q}$ is finite. From \cite{Ly2} we know for every ideal $I$ of $R$, $\Ass H^i_{I}(R)$ is finite. Now Eisestein extension is flat and this implies 
$\Ass H^i_{I S}(S)$ is also finite. So for each $(-p,q)$-pair $\Ass E_{\infty}^{-p,q}$ is finite. This concludes the proof.
\end{proof}

{\bf Remark 1}\\

From the above proof it follows that, for any regular local ring $A$ and for an ideal $I$ of it, $I$ will be cohomoogically complete intersection \cite{HeSc} if and only if $A/I$ is regular.\\

{\bf Remark 2}\\

As we mentioned in the introduction that we can construct local cohomology module for non extended ideal, whose set of associated primes is finite and associated primes also contain the prime $p$. To see this, we assume the notation as described above. Consider the ideals $J_1,\ldots, J_n$ of $S$ so that $S/J_j$ is regular for each $j=1,\ldots, n$ and atleast one of them, say $J_1$, contains prime $p$. As for example, one can take $J_1$ as maximal ideal $\n$ of $S$. Take $J= J_1\cap\dots\cap J_n$. Now Eisenstein extension is module finite and faithfully flat and in this context we assume further that the ideals $J_2,\ldots, J_n$ satisfy conditions of theorem 1 of \cite{Kwa} and for $\n= J_1$ we already have $H^{i}_{\m S}(S)= H^{i}_{\n}(S)$, for every $i\geq 0$. So, we can apply above theorem to conclude that $H^i_J (S)$ is finite and its associated primes contain $p$.

\end{document}